\documentclass[12pt,english,reqno]{amsart}
\usepackage{a4wide}
\usepackage{amsmath,amssymb,amsfonts,amsthm}
\usepackage{enumerate}
\usepackage{paralist}
\usepackage{mathtools}
\usepackage{graphicx}
\usepackage{caption}

\usepackage[
  pdfauthor={Bernd C. Kellner},
  pdfkeywords={Stirling numbers, Harmonic numbers, Bernoulli numbers, Genocchi numbers},
  pdftitle={Identities between polynomials related to Stirling and harmonic numbers},
  linktocpage,colorlinks,bookmarksnumbered]{hyperref}

\hfuzz2pt
\vfuzz2pt

% Definitions

% symbols

\newcommand{\NN}{\mathbb{N}}
\newcommand{\QQ}{\mathbb{Q}}
\newcommand{\RR}{\mathbb{R}}
\newcommand{\ZZ}{\mathbb{Z}}

\newcommand{\ST}{\mathbf{S}}
\newcommand{\HN}{\mathbf{H}}
\newcommand{\BN}{\mathbf{B}}
\newcommand{\BNN}{\widetilde{\mathbf{B}}}
\newcommand{\EN}{\mathbf{E}}
\newcommand{\GN}{\mathbf{G}}
\newcommand{\GNN}{\widetilde{\mathbf{G}}}
\newcommand{\ELN}{\mathbf{A}}
\newcommand{\SF}[2]{\genfrac{\langle}{\rangle}{0pt}{}{#1}{#2}}

\newcommand{\PR}{\mathcal{S}}
\newcommand{\SR}{\mathfrak{S}}
\newcommand{\GF}{\mathcal{G}}
\newcommand{\II}{\mathcal{I}}

\newcommand{\fs}{\mathbf{F}}
\newcommand{\fh}{\widehat{\mathbf{F}}}

\newcommand{\opbr}[1]{\langle #1\rangle}

\DeclareMathOperator{\ord}{ord}

% environments

\newtheorem{prop}{Proposition}[section]
\newtheorem{lemma}[prop]{Lemma}
\newtheorem{theorem}[prop]{Theorem}
\newtheorem{corl}[prop]{Corollary}

\theoremstyle{remark}
\newtheorem*{remark*}{Remark}

\theoremstyle{definition}
\newtheorem{tbl}[prop]{Table}

% numbering

\numberwithin{equation}{section}
\numberwithin{figure}{section}

\title[Identities between polynomials related to Stirling and harmonic numbers]
{Identities between polynomials related to\\Stirling and harmonic numbers}
\author{Bernd C. Kellner}
%\date{}
\subjclass[2010]{11B73 (Primary) 11B83, 11B68 (Secondary)}
\email{bk@bernoulli.org}
\keywords{Stirling numbers, Harmonic numbers, Bernoulli numbers, Genocchi numbers}

\begin{document}

\begin{abstract}
We consider two types of polynomials $\mathbf{F}_n (x) = \sum_{\nu=1}^n \nu! \,
\mathbf{S}_2(n,\nu) \, x^\nu$ and $\widehat{\mathbf{F}}_n (x) = \sum_{\nu=1}^n
\nu! \, \mathbf{S}_2(n,\nu) \mathbf{H}_\nu \, x^\nu$, where
$\mathbf{S}_2(n,\nu)$ are the Stirling numbers of the second kind and
$\mathbf{H}_\nu$ are the harmonic numbers. We show some properties and
relations between these polynomials. Especially, the identity
$\widehat{\mathbf{F}}_n (-\tfrac{1}{2}) = - (n-1)/2 \cdot \mathbf{F}_{n-1}
(-\tfrac{1}{2})$ is established for even $n$, where the values are connected
with Genocchi numbers. For odd $n$ the value of $\widehat{\mathbf{F}}_n
(-\tfrac{1}{2})$ is given by a convolution of these numbers. Subsequently, we
discuss some of these convolutions, which are connected with Miki type
convolutions of Bernoulli and Genocchi numbers, and derive some
$2$-adic valuations of them.
\end{abstract}

\maketitle

\section{Introduction}

The purpose of this paper is to show some relations between the polynomials
\[
  \fs_n(x) = \sum_{\nu=1}^n \SF{n}{\nu} x^\nu, \quad
    \fh_n(x) = \sum_{\nu=1}^n \SF{n}{\nu} \HN_\nu x^\nu \quad (n \geq 1).
\]
These polynomials are composed of harmonic numbers
\[
  \HN_n = \sum_{\nu=1}^n \frac{1}{\nu}
\]
and Stirling numbers of the second kind $\ST_2(n,k)$ where we use the related
numbers
\begin{equation} \label{eq:sf-def}
  \SF{n}{k} = k! \, \ST_2(n,k)
\end{equation}
obeying the recurrence
\begin{equation} \label{eq:sf-rec}
  \SF{n+1}{k} = k \left( \SF{n}{k} + \SF{n}{k-1} \right).
\end{equation}
Note that $\ST_2(n,1) = \ST_2(n,n)=1$ for $n \geq 1$ and $\ST_2(n,0)=\SF{n}{0}
= \delta_{n,0}$ for $n \geq 0$ using Kronecker's delta. For properties of
Stirling and harmonic numbers we refer to \cite{Graham&others:1994}. The
polynomials $\fs_n$ are related to the Eulerian polynomials, see
\cite[pp.~243--245]{Comtet:1974} and \cite{Prodinger:1983}, \cite{Tanny:1975}
for a survey. The numbers $\fs_n(1)$ are called ordered Bell numbers or Fubini
numbers, cf. \cite[p.~228]{Comtet:1974}. For a discussion and further
generalizations of the polynomials $\fs_n$ and $\fh_n$ see
\cite{Boyadzhiev:2005} and \cite{Dil&Kurt:2012}, respectively. Note that the
notation $\SF{n}{k}$ is frequently used also for the Eulerian numbers, which we
denote by $\ELN(n,k)$ as in \cite{Comtet:1974}; the notation $\fs_n$ is used as
in \cite{Prodinger:1983}, \cite{Tanny:1975}.

\begin{lemma} \label{lem:fsh-rec}
We have for $n \geq 1$:
\begin{align*}
  \fs_n(-1)    &= (-1)^n, \\
  \fh_n(-1)    &= (-1)^n n, \\
  \fs_{n+1}(x) &= (x^2+x) \fs'_n(x) + x \fs_n(x), \\
  \fh_{n+1}(x) &= (x^2+x) \fh'_n(x) + x \fh_n(x) + x \fs_n(x).
\end{align*}
\end{lemma}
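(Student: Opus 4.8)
The plan is to prove the two differential recurrences first, since the two special-value identities then follow quickly by induction on $n$.

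For the recurrence $\fs_{n+1}(x) = (x^2+x)\fs'_n(x) + x\fs_n(x)$ I would start from the definition $\fs_{n+1}(x) = \sum_{k\geq 1} \SF{n+1}{k} x^k$ and substitute \eqref{eq:sf-rec} in the form $\SF{n+1}{k} = k\SF{n}{k} + k\SF{n}{k-1}$. The first piece gives $\sum_k k\SF{n}{k}x^k = x\,\fs'_n(x)$, because the operator $x\,\frac{d}{dx}$ multiplies the coefficient of $x^k$ by $k$. In the second piece I shift the summation index by setting $\nu = k-1$, so that $k\SF{n}{k-1}x^k = (\nu+1)\SF{n}{\nu}\,x^{\nu+1}$; summing over $\nu$ and splitting $(\nu+1) = \nu + 1$ produces $x\cdot\bigl(x\,\fs'_n(x)\bigr) + x\cdot\fs_n(x)$. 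Adding the two pieces yields $(x^2+x)\fs'_n(x) + x\fs_n(x)$. One only has to check that no boundary terms interfere, which holds since $\SF{n}{k}$ vanishes outside $1\leq k\leq n$ for $n\geq 1$.

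The recurrence for $\fh_{n+1}$ is handled the same way, the only new ingredient being the harmonic shift $\HN_{\nu+1} = \HN_\nu + \tfrac{1}{\nu+1}$. After substituting \eqref{eq:sf-rec} into $\fh_{n+1}(x) = \sum_k \SF{n+1}{k}\HN_k x^k$, the term $k\SF{n}{k}\HN_k x^k$ again assembles to $x\,\fh'_n(x)$, while in the shifted term $k\SF{n}{k-1}\HN_k x^k = (\nu+1)\SF{n}{\nu}\HN_{\nu+1}x^{\nu+1}$ one replaces $(\nu+1)\HN_{\nu+1}$ by $(\nu+1)\HN_\nu + 1$. The $(\nu+1)\HN_\nu$ part contributes $x^2\fh'_n(x) + x\fh_n(x)$ exactly as before, and the stray summand $+1$ contributes precisely $x\fs_n(x)$. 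This last matching is the step I expect to be the main (though still routine) bookkeeping point, since it is where the extra $\fs_n$ term in the statement originates.

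Finally, the two special values follow by induction on $n$. For $n=1$ one reads off $\fs_1(x) = x$ and $\fh_1(x) = x$ directly from the definitions (using $\ST_2(1,1) = 1$ and $\HN_1 = 1$), hence $\fs_1(-1) = -1$ and $\fh_1(-1) = -1$. For the inductive step, evaluate the two recurrences at $x = -1$: since $x^2 + x$ vanishes there, the derivative terms drop out, leaving $\fs_{n+1}(-1) = -\fs_n(-1)$ and $\fh_{n+1}(-1) = -\fh_n(-1) - \fs_n(-1)$. The induction hypotheses $\fs_n(-1) = (-1)^n$ and $\fh_n(-1) = (-1)^n n$ then give $\fs_{n+1}(-1) = (-1)^{n+1}$ and $\fh_{n+1}(-1) = (-1)^{n+1}(n+1)$, which completes the proof.
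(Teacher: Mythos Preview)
Your proposal is correct and follows exactly the approach indicated in the paper: the paper's proof is the single sentence ``The recurrences follow easily by \eqref{eq:sf-rec} and the values at $x=-1$ by induction,'' and your argument spells out precisely those two steps in detail. Your bookkeeping (the operator $x\,d/dx$, the index shift $\nu=k-1$, the harmonic identity $(\nu+1)\HN_{\nu+1}=(\nu+1)\HN_\nu+1$, and the vanishing of $x^2+x$ at $x=-1$) is all correct.
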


\begin{proof}
The recurrences follow easily by \eqref{eq:sf-rec} and the values at $x=-1$
by induction.
\end{proof}

The Bernoulli numbers $\BN_n$ and the Genocchi numbers $\GN_n$
may be defined by
\begin{align}
  \BN(t) &= \frac{t}{e^t-1} = \sum_{n \geq 0} \BN_n \frac{t^n}{n!}
    \quad (|t| < 2 \pi) \label{eq:gf-bn}
\shortintertext{and}
  \GN(t) &= \frac{2t}{e^t+1} = \sum_{n \geq 0} \GN_n \frac{t^n}{n!}
    \quad (|t| < \pi), \label{eq:gf-gn}
\end{align}
where $\GN_0 = 0$ and $\BN_n = \GN_n = 0$ for odd $n > 1$,
cf.~\cite[pp.~48--49]{Comtet:1974}. Note that
\begin{equation} \label{eq:gn-bn}
  \GN_n = 2 ( 1 - 2^n ) \BN_n \quad (n \geq 0).
\end{equation}
The numbers $\BN_n$ are rational, whereas the numbers $\GN_n$ are integers.
\smallskip

Define the semiring $\PR \subset \ZZ[x]$, which consists of polynomials having
nonnegative integer coefficients. Further define the set
\begin{equation} \label{eq:def-sr}
  \SR_\alpha = \{ f \in \PR : f(\alpha+x) = (-1)^{\deg f} f(\alpha-x)
    \text{ for } x \in \RR \},
\end{equation}
where such polynomials have a reflection relation around $x=\alpha$. For
$f(x)=0$ we declare $\deg f = 0$, such that $0 \in \SR_\alpha$ is well defined.

\begin{theorem} \label{thm:fs-value}
We have the following relations for $n \geq 1$: \smallskip

\begin{compactenum}[(a)]
\item \label{item:fs-1}
      \[
        \int_{-1}^0 \fs_n(x) dx = \BN_n.
      \]
\item \label{item:fs-2}
      \[
        \fs_n (-\tfrac{1}{2}) = \frac{\GN_{n+1}}{n+1}.
      \]
\item \label{item:fs-3}
      \[
        \fs_n(x)/x, \, (x+1) \fs_n(x) \in \SR_{-1/2}.
      \]
\end{compactenum}
\end{theorem}

This theorem can be deduced from known results, which we will give later. By
$\fs_n \in \PR$ and the symmetry property \eqref{item:fs-3} above, we conclude
that $\fs_n(x) > 0$ and $(-1)^n \fs_n(-1-x) > 0$ for $x > 0$; both expressions
strictly increasing as $x \to \infty$. Except for a simple zero at $x=0$, all
real zeros of $\fs_n$ symmetrically lie around $x=-\tfrac{1}{2}$ in the
interval $(-1,0)$. For an illustration see Figure~\ref{fig:fs}. The value of
$\fs_n(-\tfrac{1}{2})$ can be seen as a central value. Note also that $x^2+x
\in \SR_{-1/2}$ occurs in the recurrences of $\fs_n$ and $\fh_n$ given in
Lemma~\ref{lem:fsh-rec}. Interestingly, the integral over the interval $[-1,0]$
and the central value are mainly connected with Bernoulli numbers. Similar
properties also exist for the polynomials $\fh_n$ as follows.

\begin{theorem} \label{thm:fh-value}
We have the following relations: \smallskip

\begin{compactenum}[(a)]
\item \label{item:fh-1}
      \[
        \int_{-1}^0 \fh_n(x) dx = - \frac{n}{2} \BN_{n-1}
        \quad (n \geq 1).
      \]
\item \label{item:fh-2}
      \[
        \fh_n(-\tfrac{1}{2})
          = (-1)^{\delta_{n,1}} \frac{1}{2} \sum_{\nu=1}^n \! \binom{n}{\nu}
          \frac{\GN_\nu}{\nu} \frac{\GN_{n+1-\nu}}{n+1-\nu}
        \quad (\text{odd $n \geq 1$}).
      \]
\item \label{item:fh-3}
      \[
        \fh_n(-\tfrac{1}{2})
          = - \frac{n-1}{2} \, \fs_{n-1}(-\tfrac{1}{2})
        \quad (\text{even $n \geq 2$}).
      \]
\item \label{item:fh-4}
      \[
        \ord_2 \fh_n(-\tfrac{1}{2}) = -1 - \begin{cases}
          \ord_2 n, & \text{if even $n \geq 2$}, \\
          2(r-1),   & \text{if $n = 2^r - 1$ $(r \geq 1)$}, \\
          \ord_2 ( n+1 ) + [ \log_2 ( n+1 ) ],
            & \text{otherwise},
        \end{cases}
      \]
      where $\ord_2$ is the $2$-adic valuation and $[\,\cdot\,]$ gives the
      integer part.
\item \label{item:fh-5}
      \[
        \fh_n(x) = \fs_n(x) + (n-1) x \fs_{n-1}(x)
          + \sum_{\nu = 1}^{n-2} \lambda_{n,\nu}(x) \fs_{\nu}(x)
        \quad (n \geq 2),
      \]
      where $\lambda_{n,\nu} \in \SR_{-1/2}$ and $\deg \lambda_{n,\nu}
      = n - \nu$ for $\nu = 1,\ldots,n-2$.
\item \label{item:fh-6}
      \[
        \fh_n(x)/x - (n-1) \fs_{n-1}(x) \in \SR_{-1/2} \quad (n \geq 2),
      \]
      where the resulting polynomial has degree $n-1$.
\end{compactenum}
\end{theorem}

The polynomials $\lambda_{n,\nu}$ will be recursively defined later in
Proposition~\ref{prop:lambda-rec}. The symmetry of $\fh_n(x)/x - (n-1)
\fs_{n-1}(x)$ is shown in Figure~\ref{fig:fh}. The first relations between the
polynomials $\fh_n$ and $\fs_n$ are given as follows.
\begin{tbl} \label{tbl:fh-fs}
\begin{align*}
  \fh_1(x) &= \fs_1(x), \\
  \fh_2(x) &= \fs_2(x) +  x \fs_1(x), \\
  \fh_3(x) &= \fs_3(x) + 2x \fs_2(x) + (x^2+x) \fs_1(x), \\
  \fh_4(x) &= \fs_4(x) + 3x \fs_3(x) + 3(x^2+x) \fs_2(x) + (2x^3+3x^2+x) \fs_1(x).
\end{align*}
\end{tbl}
\bigskip

In the following theorem a different relation is given by derivatives of $\fs_n$.
\begin{theorem} \label{thm:fsh-deriv}
We have
\[
  \fh_n (x) = \sum_{\nu=1}^n (-1)^{\nu+1} \frac{\fs_n^{(\nu)}(x)}{\nu!}
    \frac{x^\nu}{\nu} \quad (n \geq 1).
\]
\end{theorem}

\begin{remark*}
The identity \eqref{item:fh-3} of Theorem~\ref{thm:fh-value} occurred in
\cite{Kellner:2013} as an important key step in proofs. We shall use different
approaches to prove the theorems in a comprehensive way.
\end{remark*}

\section{Bernoulli and Stirling numbers}

Define
\[
  S_n(m) = \sum_{\nu=0}^{m-1} \nu^n \quad (n \geq 0).
\]
It is well known that
\begin{equation} \label{eq:sum-1}
  S_n(x) = \frac{1}{n+1}( \BN_{n+1}(x) - \BN_{n+1} ),
\end{equation}
where $\BN_n(x)$ is the $n$th Bernoulli polynomial,
cf.~\cite[p.~367]{Graham&others:1994}, with the properties
\begin{equation} \label{eq:bn-poly}
  \BN'_{n+1}(x) = (n+1) \BN_n(x), \quad \BN_n(0) = \BN_n.
\end{equation}
The Gregory-Newton expansion of $x^n$ reads
\begin{equation} \label{eq:sf-binom}
  x^n = \sum_{k=0}^n \SF{n}{k} \binom{x}{k},
\end{equation}
which follows by \eqref{eq:sf-def} and the usual definition of the numbers
$\ST_2(n,k)$ by
\[
  x^n = \sum_{k=0}^n \ST_2(n,k) (x)_k
\]
with falling factorials $(x)_k$. The summation of \eqref{eq:sf-binom}
yields another familiar formula
\begin{equation} \label{eq:sum-2}
  S_n(x) = \sum_{k=0}^{n} \SF{n}{k} \binom{x}{k+1}.
\end{equation}
Note that $\SF{n}{0} = 0$ for $n \geq 1$ and $\binom{-1}{k} = (-1)^k$.
The following formula is a classical result which is due to Worpitzky.
We give a short proof.

\begin{prop}[Worpitzky {\cite[(36), p.~215]{Worpitzky:1883}}] \label{prop:drv-fs-bn}
We have
\[
  \sum_{k=1}^n \SF{n}{k} \frac{(-1)^k}{k+1} = \BN_n \quad (n \geq 1).
\]
\end{prop}

\begin{proof}
Since $S_n(0) = 0$, we conclude by \eqref{eq:sum-1}, \eqref{eq:bn-poly}, and
\eqref{eq:sum-2} that
\[
  \BN_n = S_n'(0) = \lim_{x \to 0} S_n(x)/x
    = \lim_{x \to 0} \sum_{k=0}^{n} \SF{n}{k} \frac{1}{k+1} \binom{x-1}{k}
    = \sum_{k=1}^{n} \SF{n}{k} \frac{(-1)^k}{k+1}. \qedhere
\]
\end{proof}

A similar result with harmonic numbers is the following.

\begin{prop} \label{prop:drv-fh-bn}
We have
\[
  \sum_{k=1}^n \SF{n}{k} \HN_k \frac{(-1)^k}{k+1}
    = - \frac{n}{2} \BN_{n-1} \quad (n \geq 1).
\]
\end{prop}

\begin{proof}
The derivative of \eqref{eq:sum-2} provides that
\begin{align*}
  S'_n(x) &= \sum_{k=0}^{n} \SF{n}{k} \binom{x}{k+1}
    \sum_{j=0}^k \frac{1}{x-j} = S_n(x)/x - x V_n(x)
\shortintertext{where}
  V_n(x)  &= \sum_{k=0}^{n} \SF{n}{k} \frac{1}{k+1}
    \binom{x-1}{k} \sum_{j=1}^k \frac{1}{j-x}.
\end{align*}
Since $|V_n(0)| < \infty$ and $S_n(x) - x S'_n(x) \to 0$ as $x \to 0$,
we obtain by L'H\^{o}pital's rule that
\[
  V_n(0) = \lim_{x \to 0} \frac{S_n(x) - x S'_n(x)}{x^2}
    = \lim_{x \to 0} \frac{- x S''_n(x)}{2x} = - \frac{1}{2} S''_n(0).
\]
Using \eqref{eq:sum-1} and \eqref{eq:bn-poly} we then derive that
\[
  S''_n(x) = n \BN_{n-1}(x) \quad \text{and} \quad
    V_n(0) = - \frac{n}{2} \BN_{n-1}.
\]
This shows the claimed identity.
\end{proof}

\section{Symmetry properties}
\label{sect:symm}

We shall give some symmetry relations of the polynomials $\fs_n$ and $\fh_n$.
Note that the Eulerian numbers, as used in \cite{Comtet:1974},
\cite{Tanny:1975}, are symmetric such that $\ELN(n,k) = \ELN(n,n-k)$. Recall
the definition of $\SR_\alpha$ in \eqref{eq:def-sr}.

\begin{lemma} \label{lem:semiring}
The set $\SR_\alpha$ has pseudo semiring properties.
If $f, g \in \SR_\alpha$, then
\begin{align*}
  f \cdot g &\in \SR_\alpha, \\
  f + g     &\in \SR_\alpha, \quad (*) \\
  f'        &\in \SR_\alpha,
\end{align*}
where in case of addition and $f \cdot g \neq 0$ a parity condition
must hold such that
\[
  (*) \quad \deg f \equiv \deg g \pmod{2}.
\]
If $f$ has odd degree, then $f(\alpha)=0$.
\end{lemma}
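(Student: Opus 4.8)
The plan is to collapse all four assertions to elementary parity facts via the shift substitution $x \mapsto \alpha + x$. For $f \in \PR$ set $f^\ast(x) = f(\alpha + x)$; then $\deg f^\ast = \deg f$, the map $f \mapsto f^\ast$ satisfies $(fg)^\ast = f^\ast g^\ast$, $(f+g)^\ast = f^\ast + g^\ast$ and $(f')^\ast = (f^\ast)'$, and $f \in \SR_\alpha$ holds precisely when $f^\ast(x) = (-1)^{\deg f} f^\ast(-x)$, i.e.\ when $f^\ast$ is a genuinely even polynomial if $\deg f$ is even and a genuinely odd one if $\deg f$ is odd. Since $\PR$ is visibly closed under products, sums, and differentiation (the coefficients $k a_k$ of $f' = \sum_k k a_k x^{k-1}$ are again nonnegative integers), in each part it remains only to verify the parity relation for the transformed polynomial, with the zero polynomial always covered by the stated convention $0 \in \SR_\alpha$.

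For the product I would compute, for nonzero $f, g \in \SR_\alpha$, that $(fg)^\ast(-x) = f^\ast(-x)\, g^\ast(-x) = (-1)^{\deg f + \deg g} (fg)^\ast(x) = (-1)^{\deg(fg)}(fg)^\ast(x)$. For the derivative, differentiating $f^\ast(x) = (-1)^{\deg f} f^\ast(-x)$ yields $(f^\ast)'(x) = (-1)^{\deg f + 1}(f^\ast)'(-x)$, which is exactly the relation for $f'$ because $\deg f' = \deg f - 1$ when $\deg f \geq 1$; the remaining cases $\deg f \leq 0$ give $f' = 0$. For the last assertion, $\deg f$ odd forces $f^\ast$ to be an odd polynomial, so $f(\alpha) = f^\ast(0) = 0$.

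The addition case is the only one needing the parity hypothesis, and the only place where $f \in \PR$ is truly used, so I expect that to be the delicate step. Assuming $f, g \in \SR_\alpha$ with $fg \neq 0$ and $\deg f \equiv \deg g \pmod{2}$, both $f^\ast$ and $g^\ast$ — hence also $(f+g)^\ast = f^\ast + g^\ast$ — contain only monomials whose degree has this common parity; the remaining point is that $\deg(f+g)$ has that parity as well. Here nonnegativity of the coefficients rules out any cancellation of the leading term: if $\deg f = \deg g$ the two positive leading coefficients add, and otherwise the larger degree survives, so $\deg(f+g) = \max(\deg f, \deg g) \equiv \deg f \pmod{2}$. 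Then $(f+g)^\ast(-x) = (-1)^{\deg f}(f+g)^\ast(x) = (-1)^{\deg(f+g)}(f+g)^\ast(x)$, which is the required relation. Apart from this degree-parity bookkeeping and the routine checks that degenerate expressions reduce to $0$, I do not foresee any real obstacle.
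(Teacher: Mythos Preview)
Your proposal is correct and follows essentially the same approach as the paper: both reduce all four assertions to checking the parity of the sign $(-1)^{\deg f}$ in the reflection relation, together with the fact that $\PR$ is closed under the operations in question. Your shift substitution $f^\ast(x)=f(\alpha+x)$ is a convenient way to make this parity bookkeeping explicit (turning the reflection condition into the standard even/odd dichotomy at the origin), and you spell out the one place where $\PR$ matters---namely that nonnegative coefficients prevent leading-term cancellation in $f+g$---more carefully than the paper, whose proof is essentially a one-line appeal to ``the parity of $(-1)^{\deg f}$, resp., $(-1)^{\deg g}$.''
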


\begin{proof}
The cases, where $f=0$ or $g=0$, are trivial. Since $\SR_\alpha \subset \PR$,
the pseudo semiring properties follow by the parity of $(-1)^{\deg f}$,
resp., $(-1)^{\deg g}$. If $\deg f$ is odd, then $f(\alpha) = - f(\alpha)$,
which implies that $f(\alpha)=0$.
\end{proof}

\begin{prop}[Tanny {\cite[(16), p.~737]{Tanny:1975}}]
We have
\[
  \fs_n (x) = \sum_{k=1}^n \ELN(n,k) x^{n-k+1} (x+1)^{k-1} \quad (n \geq 1).
\]
\end{prop}

\begin{corl} \label{corl:fs-sr}
We have
\[
  \fs_n(x)/x, \, (x+1) \fs_n(x) \in \SR_{-1/2} \quad (n \geq 1).
\]
\end{corl}

\begin{proof}
Using the symmetry of $\ELN(n,k)$, we obtain that
\[
   f_n(x) := \fs_n(x)/x = \sum_{k=1}^n \ELN(n,k) x^{n-k} (x+1)^{k-1}
     = (-1)^{n-1} f_n(-(x+1)).
\]
Hence, $f_n(-\tfrac{1}{2}+x) = (-1)^{n-1} f_n(-\tfrac{1}{2}-x)$ and $\deg f_n =
n-1$ show that $\fs_n(x)/x \in \SR_{-1/2}$. Since $x^2+x \in \SR_{-1/2}$,
it also follows that
\[
  (x^2+x) \cdot \fs_n(x)/x = (x+1) \fs_n(x) \in \SR_{-1/2}. \qedhere
\]
\end{proof}

\begin{prop} \label{prop:lambda-rec}
We have for $n \geq 1$:
\[
  \fh_n (x) = \sum_{\nu=1}^n \lambda_{n,\nu}(x) \fs_\nu( x )
\]
where
\begin{align*}
  \lambda_{n,\nu}(x) &= \begin{cases}
       1, & \text{if $n=\nu=1$}, \\
       0, & \text{if $\nu \notin \{1,\ldots,n\}$}, \\
    \end{cases}
\intertext{otherwise recursively defined by}
  \lambda_{n+1,\nu}(x) &= (x^2+x) \lambda_{n,\nu}'(x) + \lambda_{n,\nu-1}(x)
    + \delta_{n,\nu} x.
\end{align*}
Furthermore $\lambda_{n,\nu} \in \PR$ and $\deg \lambda_{n,\nu} = n - \nu$
for $\nu = 1,\ldots,n$. Especially
\[
  \lambda_{n,n-1}(x) = (n-1)x.
\]
\end{prop}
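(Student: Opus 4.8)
The plan is to establish the four assertions in turn — the representation $\fh_n = \sum_\nu \lambda_{n,\nu}\fs_\nu$, the membership $\lambda_{n,\nu}\in\PR$, the degree formula $\deg\lambda_{n,\nu}=n-\nu$, and the closed form $\lambda_{n,n-1}(x)=(n-1)x$ — each by induction on $n$, since every later claim leans on the earlier ones.

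For the representation, the base case is $\fh_1(x)=x=\fs_1(x)=\lambda_{1,1}(x)\fs_1(x)$. For the inductive step I would start from the fourth identity of Lemma~\ref{lem:fsh-rec},
\[
  \fh_{n+1}(x)=(x^2+x)\fh_n'(x)+x\fh_n(x)+x\fs_n(x),
\]
substitute the inductive hypothesis $\fh_n=\sum_{\nu=1}^n\lambda_{n,\nu}\fs_\nu$ together with its derivative, and then invoke the third identity of Lemma~\ref{lem:fsh-rec} in the form $(x^2+x)\fs_\nu'(x)=\fs_{\nu+1}(x)-x\fs_\nu(x)$ to eliminate every occurrence of $\fs_\nu'$. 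After this the terms $\pm x\lambda_{n,\nu}\fs_\nu$ cancel; shifting the index in the remaining $\sum\lambda_{n,\nu}\fs_{\nu+1}$, using the convention $\lambda_{n,\nu}=0$ for $\nu\notin\{1,\dots,n\}$ so that all sums run over $\nu=1,\dots,n+1$, and writing the leftover $x\fs_n$ as the $\nu=n$ contribution $x\delta_{n,\nu}\fs_\nu$, one finds that the coefficient of $\fs_\nu$ has become exactly $(x^2+x)\lambda_{n,\nu}'(x)+\lambda_{n,\nu-1}(x)+x\delta_{n,\nu}$, which is $\lambda_{n+1,\nu}$ by definition.

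The inclusion $\lambda_{n,\nu}\in\PR$ then follows by a trivial induction: $\PR$ contains $x^2+x$ and $x$ and is closed under addition, multiplication, and differentiation of polynomials with nonnegative integer coefficients, so the defining recurrence never leaves $\PR$. For the degree formula one again inducts on $n$ via the recurrence, splitting into the cases $\nu=n+1$ (where $\lambda_{n+1,n+1}=\lambda_{n,n}$, a nonzero constant), $\nu=n$ (where $\lambda_{n,n}'=0$, so $\lambda_{n+1,n}=\lambda_{n,n-1}+x$ has degree $1=(n+1)-n$), and $1\le\nu\le n-1$, where both $(x^2+x)\lambda_{n,\nu}'(x)$ and $\lambda_{n,\nu-1}(x)$ (the latter for $\nu\ge 2$) have degree $n-\nu+1$. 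The one genuinely delicate point — and the reason the membership in $\PR$ must be proved first — is that in this last range the two contributions of degree $n-\nu+1$ cannot cancel: all the polynomials involved lie in $\PR$ and hence have positive top coefficients, so the sum really does have degree $(n+1)-\nu$.

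Finally, $\lambda_{n,n-1}(x)=(n-1)x$ drops out by induction from the recurrence specialized to $\nu=n$: since $\lambda_{n,n}$ is constant, $\lambda_{n+1,n}(x)=\lambda_{n,n-1}(x)+x$, and with the base value $\lambda_{2,1}(x)=x$ the claim is immediate. The only real obstacle in the whole argument is the non-cancellation of leading terms in the degree count, which is precisely what dictates the order of the four inductions.
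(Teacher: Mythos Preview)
Your proposal is correct and follows essentially the same approach as the paper: both argue by induction on $n$, use the recurrences of Lemma~\ref{lem:fsh-rec} in the form $(x^2+x)\fs_\nu'=\fs_{\nu+1}-x\fs_\nu$ to derive the $\lambda$-recurrence, and then read off the special values $\lambda_{n,n}=1$, $\lambda_{n,n-1}=(n-1)x$, the $\PR$-membership, and the degree. The only organizational difference is that the paper bundles all four assertions into a single induction on $n$, whereas you run them as four separate inductions in the order needed; your extra remark that $\PR$-membership is what rules out cancellation of leading terms in the degree count is a point the paper leaves implicit in its $\max$ formula.
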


\begin{proof}
We use induction on $n$. For $n = 1$ we have
\[
  \fh_1 (x) = \fs_1( x ) \quad \text{and} \quad
    \lambda_{1,\nu}(x) = \delta_{1,\nu}.
\]
Now assume the result is true for $n$. By assumption we have
\begin{align*}
  \fh_n (x)    &= \sum_{\nu=1}^n \lambda_{n,\nu}(x) \fs_\nu( x ), \\
  \fh'_n (x)   &= \sum_{\nu=1}^n \lambda'_{n,\nu}(x) \fs_\nu( x )
    + \lambda_{n,\nu}(x) \fs'_\nu( x ),
\shortintertext{and by Lemma~\ref{lem:fsh-rec} that}
  \fh_{n+1}(x) &= (x^2+x) \fh'_n(x) + x \fh_n(x) + x \fs_n(x), \\
    (x^2+x) \fs'_n(x) &= \fs_{n+1}(x) - x \fs_n(x).
\end{align*}
It follows that \allowdisplaybreaks
\begin{align*}
  \fh_{n+1}(x)
    &= (x^2+x) \sum_{\nu=1}^n \left( \lambda'_{n,\nu}(x) \fs_\nu( x )
      + \lambda_{n,\nu}(x) \fs'_\nu( x ) \right) + x \fh_n(x) + x \fs_n(x) \\
    &= (x^2+x) \sum_{\nu=1}^n \lambda'_{n,\nu}(x) \fs_\nu( x )
       + \sum_{\nu=1}^n \lambda_{n,\nu}(x) \left( \fs_{\nu+1}(x) - x \fs_\nu(x)
       \right) \\
    &\quad + x \fh_n(x) + x \fs_n(x) \\
    &= (x^2+x) \sum_{\nu=1}^n \lambda'_{n,\nu}(x) \fs_\nu( x ) + \sum_{\nu=1}^n
       \lambda_{n,\nu}(x) \fs_{\nu+1}(x)  + x \fs_n(x) \\
    &= \sum_{\nu=1}^{n+1} \lambda_{n+1,\nu}(x) \fs_\nu( x ).
\end{align*}
Thus
\begin{align}
  \lambda_{n+1,\nu}(x) &= (x^2+x) \lambda_{n,\nu}'(x) + \lambda_{n,\nu-1}(x)
    + \delta_{n,\nu} x. \nonumber
\shortintertext{In particular, we have}
  \lambda_{n+1,n+1}(x) &= \lambda_{n,n}(x) = 1
  \label{eq:loc-lambda-1}
\shortintertext{and}
  \lambda_{n+1,n}(x)   &= \lambda_{n,n-1}(x) + x = (n-1)x + x = nx.
  \label{eq:loc-lambda-2}
\end{align}
The recurrence shows that $\lambda_{n+1,\nu} \in \PR$ for $\nu = 1,\ldots,n+1$.
Therefore we conclude for $1 \leq \nu < n$ that
\[
  \deg \lambda_{n+1,\nu}
    = \max( 2 + \deg \lambda_{n,\nu}', \lambda_{n,\nu-1} ) = n-\nu + 1.
\]
Along with \eqref{eq:loc-lambda-1} and \eqref{eq:loc-lambda-2} this shows
the claimed properties for $n+1$.
\end{proof}

\begin{prop} \label{prop:lambda-sr}
We have $\lambda_{n,\nu} \in \SR_{-1/2}$ for $n-2 \geq \nu \geq 1$.
\end{prop}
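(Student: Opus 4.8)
The plan is to prove $\lambda_{n,\nu}\in\SR_{-1/2}$ by induction on $n$, using the recurrence
\[
  \lambda_{n+1,\nu}(x) = (x^2+x)\lambda_{n,\nu}'(x) + \lambda_{n,\nu-1}(x) + x\delta_{n,\nu}
\]
from Proposition~\ref{prop:lambda-rec} together with the pseudo semiring properties of $\SR_{-1/2}$ from Lemma~\ref{lem:semiring}. The key observation is that $x^2+x = (x+1/2)^2 - 1/4 \in \SR_{-1/2}$ (it is even under reflection about $-1/2$) and that $x\in\SR_{-1/2}$ as well, since $x = (x+1/2) - 1/2$ is... wait, that is not odd; rather the relevant fact is that $x$ is \emph{not} in $\SR_{-1/2}$, but $(x^2+x)$ is, and $\SR_{-1/2}$ is closed under multiplication by $(x^2+x)$, under differentiation, and under addition subject to the parity condition $\deg f\equiv\deg g\pmod 2$. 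Since $\deg\lambda_{n,\nu}=n-\nu$, the parity of each term is controlled, so the degree bookkeeping must be checked carefully.

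First I would establish the base case. For $n=3$ we need $\lambda_{3,1}(x)\in\SR_{-1/2}$; from Table~\ref{tbl:fh-fs} (or the recurrence) $\lambda_{3,1}(x)=x^2+x$, which manifestly satisfies $f(-1/2+x)=f(-1/2-x)$ and has even degree, so it lies in $\SR_{-1/2}$. Then for the inductive step, assuming $\lambda_{n,\mu}\in\SR_{-1/2}$ for all $\mu$ with $1\le\mu\le n-2$, I would show $\lambda_{n+1,\nu}\in\SR_{-1/2}$ for $1\le\nu\le n-1$. In this range the Kronecker term $x\delta_{n,\nu}$ vanishes (since $\nu\le n-1<n$), so
\[
  \lambda_{n+1,\nu}(x) = (x^2+x)\lambda_{n,\nu}'(x) + \lambda_{n,\nu-1}(x).
\]
For $\nu\ge 2$ both $\lambda_{n,\nu}$ and $\lambda_{n,\nu-1}$ fall within the inductive hypothesis range $1\le\mu\le n-2$ (using $\nu\le n-1$), so $\lambda_{n,\nu}'\in\SR_{-1/2}$, hence $(x^2+x)\lambda_{n,\nu}'\in\SR_{-1/2}$ with degree $(n-\nu)+1$, and $\lambda_{n,\nu-1}\in\SR_{-1/2}$ with degree $(n-1)-(\nu-1)=n-\nu$. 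These degrees are not congruent mod $2$ — so I must be more careful: the degree of $(x^2+x)\lambda_{n,\nu}'$ is $2 + (n-\nu-1) = n-\nu+1$ while $\deg\lambda_{n,\nu-1}=n-\nu+1$ as well (from Proposition~\ref{prop:lambda-rec}, $\deg\lambda_{n,\nu-1}=n-(\nu-1)=n-\nu+1$), so the parities \emph{do} match and the sum lies in $\SR_{-1/2}$ by Lemma~\ref{lem:semiring}$(*)$.

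The remaining case is $\nu=1$: here $\lambda_{n,0}=0$, so $\lambda_{n+1,1}(x)=(x^2+x)\lambda_{n,1}'(x)$, and since $\lambda_{n,1}\in\SR_{-1/2}$ by hypothesis (as $1\le n-2$ once $n\ge 3$), differentiation and multiplication by $x^2+x$ keep us in $\SR_{-1/2}$. This handles $1\le\nu\le n-1$, i.e.\ the range $(n+1)-2\ge\nu\ge 1$, completing the induction. I expect the main obstacle to be the degree/parity bookkeeping in the addition step: one must verify that $\deg\bigl((x^2+x)\lambda_{n,\nu}'\bigr)$ and $\deg\lambda_{n,\nu-1}$ have the same parity before invoking Lemma~\ref{lem:semiring}$(*)$, and one must also be attentive to the edge cases where $\lambda_{n,\nu}$ is constant (so its derivative is $0$, which is harmless since $0\in\SR_{-1/2}$) and where $\lambda_{n,\nu-1}=0$. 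A secondary point worth checking is that the inductive hypothesis as stated — valid for $\mu$ up to $n-2$ — is exactly strong enough to cover all indices appearing on the right-hand side for the target range $\nu\le n-1$; indeed $\nu\le n-1$ gives $\nu-1\le n-2$ and $\nu\le n-2$ fails only at $\nu=n-1$, but at $\nu=n-1$ we need $\lambda_{n,n-1}=(n-1)x$, whose derivative is the constant $n-1\in\SR_{-1/2}$, so that boundary index is handled separately and cleanly.
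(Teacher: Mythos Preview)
Your proposal is correct and follows essentially the same route as the paper: induction on $n$ with base case $\lambda_{3,1}=x^2+x$, the observation that $x^2+x\in\SR_{-1/2}$, and the recurrence $\lambda_{n+1,\nu}=(x^2+x)\lambda_{n,\nu}'+\lambda_{n,\nu-1}$ (the Kronecker term being absent in the relevant range) combined with the closure properties of Lemma~\ref{lem:semiring}. The paper splits the inductive step the same way---generic $\nu$ with the degree check $2+\deg\lambda_{n,\nu}'=\deg\lambda_{n,\nu-1}$, the edge $\nu=1$ where $\lambda_{n,0}=0$, and the boundary $\nu=n-1$ where $\lambda_{n,n-1}'=n-1$ is handled directly---so despite your more exploratory write-up, the argument is identical.
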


\begin{proof}
We make use of Proposition~\ref{prop:lambda-rec} and Lemma~\ref{lem:semiring}.
For $n-1 \geq \nu \geq 1$ we have
\[
  \lambda_{n+1,\nu}(x) = (x^2+x) \lambda_{n,\nu}'(x) + \lambda_{n,\nu-1}(x).
\]
We use induction on $n$. For $n = 3$ we have
\[
  \lambda_{3,1}(x) = (x^2+x) \lambda_{2,1}'(x) + \lambda_{2,0}(x)
    = x^2+x \in \SR_{-1/2}.
\]
Now assume the result holds for $n \geq 3$. For $\nu = 1, \ldots, n-2$ we have
\[
  \lambda_{n+1,\nu}(x) = (x^2+x) \lambda_{n,\nu}'(x) + \lambda_{n,\nu-1}(x)
    \in \SR_{-1/2},
\]
since $\lambda_{n,\nu}', \lambda_{n,\nu-1} \in \SR_{-1/2}$ by assumption
and $2 + \deg \lambda_{n,\nu}' = \deg \lambda_{n,\nu-1}$ if $\nu \neq 1$,
otherwise $\lambda_{n,\nu-1} = 0$. It remains the case $\nu = n-1$:
\[
  \lambda_{n+1,n-1}(x) = (x^2+x) (n-1) + \lambda_{n,n-2}(x) \in \SR_{-1/2},
\]
since $\lambda_{n,n-1}'(x) = n-1$ and $\lambda_{n,n-2} \in \SR_{-1/2}$
with $\deg \lambda_{n,n-2} = 2$.
\end{proof}

\begin{corl} \label{corl:fh-sr}
We have
\[
  \fh_n(x)/x - (n-1) \fs_{n-1}(x) \in \SR_{-1/2} \quad (n \geq 2),
\]
where the resulting polynomial has degree $n-1$.
\end{corl}

\begin{proof}
Propositions \ref{prop:lambda-rec} and \ref{prop:lambda-sr} show that
\begin{equation} \label{eq:loc-fh-sr}
  \fh_n(x)/x - (n-1) \fs_{n-1}(x) = \fs_n(x)/x
    + \sum_{\nu = 1}^{n-2} \lambda_{n,\nu}(x) \cdot \fs_{\nu}(x)/x,
\end{equation}
where $\lambda_{n,\nu} \in \SR_{-1/2}$ and $\deg \lambda_{n,\nu} = n - \nu$ for
$\nu = 1,\ldots,n-2$. By Corollary~\ref{corl:fs-sr} and
Lemma~\ref{lem:semiring} we conclude that $\lambda_{n,\nu}(x) \cdot
\fs_{\nu}(x)/x \in \SR_{-1/2}$. Since the first term and the products on the
right-hand side of \eqref{eq:loc-fh-sr} lie in $\SR_{-1/2}$ having the same
degree $n-1$, the left-hand side of \eqref{eq:loc-fh-sr} also lies in
$\SR_{-1/2}$ and has degree $n-1$.
\end{proof}

While $\fs_n(x)/x \in \SR_{-1/2}$ for $n \geq 1$, we have $\fh_n(x)/x \notin
\SR_{-1/2}$ for $n \geq 2$; compare values at $x=0$ and $x=-1$ using
Lemma~\ref{lem:fsh-rec}. The latter function needs a correction term to lie in
$\SR_{-1/2}$. For an illustration of the symmetry of $\fs_n(x)/x$ and
$\fh_n(x)/x - (n-1) \fs_{n-1}(x)$ see Figures \ref{fig:fs} and \ref{fig:fh}.

\section{Generating functions}

Let $[t^n]$ be the linear operator, that gives the coefficient of $t^n$
of a formal power series, such that
\[
  f(t) = \sum_{n \geq 0} a_n t^n, \quad [t^n] \, f(t) = a_n,
\]
see \cite[p.~197]{Graham&others:1994}. Define $\opbr{t^n} = n! [t^n]$. Let
$(h_n(x))_{n \geq 0}$ be a sequence of functions. Then we denote by
\[
  \GF h(x,t) = \sum_{n \geq 0} h_n(x) \frac{t^n}{n!}
\]
the two-variable exponential generating function, such that
\[
  \opbr{t^n} \, \GF h(x,t) = h_n(x).
\]

Recall the definitions of $\BN(t)$ and $\GN(t)$ in \eqref{eq:gf-bn} and
\eqref{eq:gf-gn}, respectively. We further set
\[
  \BNN(t) = \BN(t)/t, \quad \GNN(t) = \GN(t)/t,
\]
and
\[
  \BNN_0 = \GNN_0 = 0, \quad \BNN_n = \BN_n/n, \quad
    \GNN_n = \GN_n/n \quad (n \geq 1).
\]

\begin{lemma} \label{lem:gnn-diff-eq}
The function $y(t)=\GNN(t)$ satisfies the Bernoulli differential equation
\[
  y' + y = \frac{1}{2} y^2, \quad \text{which is equivalent to} \quad
    (\log y)' = \frac{1}{2} y - 1.
\]
\end{lemma}

\begin{proof}
Both equations are identical due to $(\log y)' = y'/y$
and are verified by $y(t) = 2/(e^t+1)$.
\end{proof}

\begin{corl} \label{corl:log-gnn}
We have
\begin{alignat}{3}
  \GNN(t) &= \sum_{n \geq 0} \GNN_{n+1} \frac{t^n}{n!}
    \quad && (|t| < \pi), \label{eq:gf-gnn} \\
  \log \GNN(t) &= \frac{1}{2} \sum_{n \geq 1} (-1)^n \GNN_n \frac{t^n}{n!}
    \quad && (|t| < \pi). \label{eq:gf-gnn-log}
\end{alignat}
\end{corl}

\begin{proof}
Eq.~\eqref{eq:gf-gnn} follows by its definition. Integrating the right-hand
side differential equation of Lemma~\ref{lem:gnn-diff-eq}, we derive that
\[
  \log \GNN(t) = \int \left( \frac{1}{2}\GNN(t) - 1 \right) dt
    = \frac{1}{2} \sum_{n \geq 1} \GNN_n \frac{t^n}{n!} - t + C
\]
with a constant $C$. Since $\log \GNN(0) = 0$, we obtain $C=0$.
Note that $(-1)^n \GNN_n = \GNN_n$ for $n \geq 2$ and $\GNN_1 = 1$.
With $t \, \GNN_1/2 - t = -t \, \GNN_1/2$ we finally get \eqref{eq:gf-gnn-log}.
\end{proof}

We also have a connection with hyperbolic functions, where we casually obtain
the known coefficients of the following function by \eqref{eq:gf-gnn-log}.
\begin{lemma} \label{lem:gnn-hyper}
We have
\[
  \log \GNN(t) = - \frac{t}{2} - \log \cosh \left( \frac{t}{2} \right).
\]
\end{lemma}

\begin{proof}
This follows by
\[
  e^{t/2} \cosh \left( \frac{t}{2} \right) = \frac{e^t+1}{2} = \GNN(t)^{-1}.
    \qedhere
\]
\end{proof}

\begin{prop} \label{prop:gf-fn}
Define $\fs_0(x)=\fh_0(x)=1$ and $\psi(x,t) = 1-x(e^t-1)$. Then we have
\begin{enumerate}[(a)]
\item \label{item:gf-f-1}
      \[
        \GF \fs(x,t) = \frac{1}{\psi(x,t)},
          \quad \int \GF \fs(x,t) dx = - \BNN(t) \log \psi(x,t),
      \]
\item \label{item:gf-f-2}
      \[
        \GF \fh(x,t) = - \frac{\log \psi(x,t)}{\psi(x,t)},
          \quad \int \GF \fh(x,t) dx = \frac{1}{2} \BNN(t)
          \left( \log \psi(x,t) \right)^2.
      \]
\end{enumerate}
\end{prop}

\begin{proof}
Set $u = e^t-1$. Note that $1/u = \BNN(t)$ and $1 - xu = \psi(x,t)$. We need
the following generating functions (cf. \cite[p.~351]{Graham&others:1994}):
\begin{align}
  (e^t-1)^k &= \sum_{n \geq k} \SF{n}{k} \frac{t^n}{n!}, \label{eq:gf-sf} \\
  - \frac{\log(1-t)}{1-t} &= \sum_{n \geq 1} \HN_n t^n. \label{eq:gf-hn}
\end{align}
\eqref{item:gf-f-1} Using \eqref{eq:gf-sf} we obtain that
\begin{equation} \label{eq:loc-gf-fs}
  \frac{1}{1-x(e^t-1)} = \sum_{k \geq 0} (x(e^t-1))^k
    = \sum_{k \geq 0} x^k \sum_{n \geq k} \SF{n}{k} \frac{t^n}{n!}
    = \sum_{n \geq 0} \fs_n(x) \frac{t^n}{n!} = \GF \fs(x,t).
\end{equation}
We further deduce that
\[
  \int \GF \fs(x,t) dx = \int \frac{dx}{1 - xu} = - \frac{\log(1 - xu)}{u}
    = - \BNN(t) \log \psi(x,t).
\]
\eqref{item:gf-f-2} First substitute $t$ by $x(e^t-1)$ in \eqref{eq:gf-hn}.
The result for $\GF \fh(x,t)$ is similarly derived as in \eqref{eq:loc-gf-fs}
with an additional factor $\HN_k$. The integral follows by
\[
  \int \frac{\log(1 - xu)}{1 - xu} dx = - \frac{\log(1 - xu)^2}{2u}. \qedhere
\]
\end{proof}

\begin{prop} \label{prop:val-fn}
We have
\begin{align*}
  \fs_n(-\tfrac{1}{2}) &= \GNN_{n+1} \quad (n \geq 1), \\
  \fh_n(-\tfrac{1}{2}) &= \begin{cases}
    -\frac{1}{2}, & \text{if $n = 1$}, \\
    -\frac{n-1}{2} \GNN_n, & \text{if even $n \geq 2$}, \\
    \frac{1}{2} \sum\limits_{\nu=1}^n \! \binom{n}{\nu}
    \GNN_\nu \GNN_{n+1-\nu}, & \text{if odd $n \geq 3$}.
  \end{cases}
\end{align*}
\end{prop}

\begin{proof}
From Proposition~\ref{prop:gf-fn} we conclude that
\[
  \psi(-\tfrac{1}{2},t) = \frac{e^t+1}{2} = \GNN(t)^{-1}.
\]
Hence,
\[
  \GF \fs(-\tfrac{1}{2},t) = \GNN(t) \quad \text{and} \quad
    \GF \fh(-\tfrac{1}{2},t) = \GNN(t) \log \GNN(t).
\]
By \eqref{eq:gf-gnn} we derive that
\[
  \fs_n(-\tfrac{1}{2}) = \opbr{t^n} \, \GF \fs(-\tfrac{1}{2},t)
    = \opbr{t^n} \, \GNN(t) = \GNN_{n+1}.
\]
Similarly, we obtain that
\[
  \fh_n(-\tfrac{1}{2}) = \opbr{t^n} \, \GF \fh(-\tfrac{1}{2},t)
    = \opbr{t^n} \, \GNN(t) \log \GNN(t) = c_n,
\]
where $c_n$ arises from the convolution sum caused by the Cauchy product of $\GNN(t)$
and $\log \GNN(t)$. By means of \eqref{eq:gf-gnn} and \eqref{eq:gf-gnn-log} we
achieve that
\[
  c_n = \frac{1}{2} \sum_{\nu=1}^n \binom{n}{\nu}
    (-1)^\nu \GNN_\nu \GNN_{n+1-\nu}.
\]
Case even $n$: The indices $\nu$ and $n+1-\nu$ have different parity.
Since $\GNN_\nu = 0$ for odd $\nu > 1$ and $\GNN_1 = 1$, the sum simplifies to
\[
  c_n = \frac{1}{2} \left( - \binom{n}{1} \GNN_1 \GNN_n
    + \binom{n}{n} \GNN_n \GNN_1 \right)
    = - \frac{n-1}{2} \GNN_n.
\]
Case odd $n$: For $n=1$ we compute $c_1 = -\tfrac{1}{2}$. Let $n \geq 3$.
Because of the same parity of the indices, we finally infer that
\[
  c_n = \frac{1}{2} \sum_{\nu=1}^n \binom{n}{\nu}  \GNN_\nu \GNN_{n+1-\nu}
\]
by omitting the factor $(-1)^\nu$.
\end{proof}

\begin{prop} \label{prop:int-fn}
We have
\[
  \int_{-1}^0 \fs_n(x) dx = \BN_n \quad \text{and} \quad
    \int_{-1}^0 \fh_n(x) dx = -\frac{n}{2} \BN_{n-1} \quad (n \geq 1).
\]
\end{prop}

\begin{proof}
Using Proposition~\ref{prop:gf-fn}, we obtain that $\psi(0,t) = 1$ and
$\psi(-1,t) = e^t$. Therefore
\begin{align*}
  \int_{-1}^0 \GF \fs(x,t) dx &= - \BNN(t) \log \psi(x,t) \Big|_{x=-1}^{0}
    = \BNN(t) t = \BN(t)
\shortintertext{and}
  \int_{-1}^0 \GF \fh(x,t) dx &= \frac{1}{2} \BNN(t)
    \left( \log \psi(x,t) \right)^2 \Big|_{x=-1}^{0}
    = - \frac{1}{2} \BNN(t) t^2 = - \frac{t}{2} \BN(t).
\end{align*}
Since the integrals above are independent of $t$,  the operator $\opbr{t^n}$
commutes with integration. Applying $\opbr{t^n}$ to these equations easily
yields the results.
\end{proof}

\begin{remark*}
The generating function $\GF \fs(x,t)$ can be found in \cite[(9), p.~736]{Tanny:1975}
and \cite[(3.14), p.~3853]{Boyadzhiev:2005}. The value of $\fs_n(-\tfrac{1}{2})$ was
posed as an exercise in \cite[6.76, p.~559]{Graham&others:1994} and also given
in \cite[(3.29), p.~3855]{Boyadzhiev:2005}; see \cite[p.~288]{Sprugnoli:1994} for a
short proof using the theory of Riordan arrays.
\end{remark*}

\section{Convolutions}

We use the notations
\[
  \begin{aligned}
    ( \alpha_r + \beta_s )^n
      &= \sum_{\nu=0}^n \binom{n}{\nu} \alpha_{r+\nu} \beta_{s+n-\nu}, \\
    \{ \alpha_r + \beta_s \}^n
      &= \sum_{\nu=0}^n \alpha_{r+\nu} \beta_{s+n-\nu}
  \end{aligned}
  \quad (n, r, s \geq 0)
\]
for symmetric binomial, resp.\ usual convolutions of two sequences
$(\alpha_\nu)_{\nu \geq 0}$ and $(\beta_\nu)_{\nu \geq 0}$. Unless otherwise
noted, we generally assume $n \geq 1$ for convolutions in this section. We
first need a simple lemma (cf.~\cite[p.~82]{Gessel:2005}).

\begin{lemma} \label{lem:conv-rel}
If $n \geq 1$ and $r,s \geq 0$, then
\begin{align*}
  (\alpha_r + \beta_s)^n &= (\alpha_r + \beta_{s+1})^{n-1}
    + (\alpha_{r+1} + \beta_s)^{n-1}, \\
  (\widetilde{\alpha}_0 + \widetilde{\beta}_0)^n
    &= \frac{1}{n} (\widetilde{\alpha}_0 + \beta_0)^n
    + \frac{1}{n} (\alpha_0 + \widetilde{\beta}_0)^n,
\end{align*}
where for the second part $\alpha_0 = \widetilde{\alpha}_0 = \beta_0 =
\widetilde{\beta}_0 = 0$ and $\widetilde{\alpha}_\nu = \alpha_\nu / \nu$,
$\widetilde{\beta}_\nu = \beta_\nu / \nu$ for $\nu \geq 1$.
\end{lemma}

\begin{proof}
The first part follows by $\binom{n}{\nu} = \binom{n-1}{\nu} + \binom{n-1}{\nu-1}$,
the second part by the identity $1/(\nu (n-\nu)) = 1 / (n \nu) + 1 / (n (n-\nu))$.
\end{proof}

The Euler polynomials $\EN_n(x)$ are defined by
\begin{equation} \label{eq:gf-en}
  \frac{2e^{xt}}{e^t+1} = \sum_{n \geq 0} \EN_n(x) \frac{t^n}{n!}
    \quad (|t| < \pi).
\end{equation}
Compared to \eqref{eq:gf-gn} and \eqref{eq:gf-gnn},
formulas with $\EN_n(x)$ are naturally transferred to
Genocchi numbers by the relation $\EN_n(0) = \GNN_{n+1}$.
The well-known identity (\cite[(17), p.~135]{Norlund:1920},
\cite[(51.6.2), p.~346]{Hansen:1975})
\[
  (\EN_0(x) + \EN_0(y))^n = 2(1-x-y)\EN_n(x+y) + 2\EN_{n+1}(x+y)
\]
leads to basic convolutions of the Genocchi numbers
\begin{align}
  (\GNN_1 + \GNN_1)^n &= 2 \GNN_{n+2} + 2 \GNN_{n+1}, \label{eq:conv-gnn-11} \\
  (\GNN_1 + \GNN_2)^n &= \GNN_{n+3} + \GNN_{n+2}, \label{eq:conv-gnn-12}
\end{align}
where the latter equation is derived by Lemma~\ref{lem:conv-rel}. Note that
\eqref{eq:conv-gnn-11} also follows immediately by Lemma~\ref{lem:gnn-diff-eq}
and \eqref{eq:gf-gnn}. More general convolution identities can be found in
\cite{Chu&Zhou:2012} for Bernoulli and Euler polynomials, that cover some known
convolutions as special cases.
\smallskip

As a result of Proposition \ref{prop:val-fn} in the last section, the convolution
\begin{equation} \label{eq:conv-gnn-0}
  ( \GNN_0 + \GNN_1 )^n = \frac{1}{2} ( \GNN_0 + \GNN_0 )^{n+1}
\end{equation}
has appeared, which resists a simple evaluation for odd $n > 1$; the right-hand
side follows by Lemma~\ref{lem:conv-rel}. We shall give some arguments in the
following that this remains an open problem. Convolutions with different
indices in the shape of $( \GN_j + \GN_k )^n$ for $j+k \geq 0$ and $( \GNN_1 +
\GNN_k )^n$ for $k \geq 1$ are discussed in \cite{Agoh&Dilcher:2007} and
\cite{Agoh:1989}, respectively. A connection with the last-mentioned
convolutions is established by the following lemma.

\begin{lemma} \label{lem:conv-sum-gnn}
If $n \geq 1$, then
\[
  ( \GNN_0 + \GNN_1 )^n = \sum_{k=1}^{n} ( \GNN_1 + \GNN_k )^{n-k}.
\]
\end{lemma}

\begin{proof}
By Lemma~\ref{lem:conv-rel} we obtain the recurrences
\[
  ( \GNN_0 + \GNN_k )^{n-k+1} = ( \GNN_0 + \GNN_{k+1} )^{n-k}
    + ( \GNN_1 + \GNN_k )^{n-k}
\]
for $k=1,\ldots,n$. Since $( \GNN_0 + \GNN_{n+1} )^0 = 0$,
this gives the claimed sum.
\end{proof}

However, this will not simplify \eqref{eq:conv-gnn-0}, see below. As before, we
set $\BNN_0(x) = 0$ and $\BNN_n(x) = \BN_n(x)/n$ $(n \geq 1)$ for the Bernoulli
polynomials. We can translate \eqref{eq:conv-gnn-0} as follows.

\begin{lemma} \label{lem:conv-gnn-00}
We have
\[
  \frac{1}{4} ( \GNN_0 + \GNN_0 )^n = ( \BNN_0 + \BNN_0 )^n
    - 2^n ( \BNN_0 + \BNN_0(\tfrac{1}{2}) )^n
    \quad (n \geq 1).
\]
\end{lemma}

\begin{proof}
By \eqref{eq:gn-bn} we have $\GNN_\nu \GNN_{n-\nu} / 4 = ( 1 - 2^\nu -
2^{n-\nu} + 2^n ) \BNN_\nu \BNN_{n-\nu}$. It is well known that
$\BN_n(\tfrac{1}{2}) = (2^{1-n}-1) \BN_n$. Observing the symmetry, the result
follows from $( 2^{n-\nu} - 2^{n-1} ) \BNN_\nu = 2^{n-1} ( 2^{1-\nu}-1 )
\BNN_\nu = 2^{n-1} \BNN_\nu(\tfrac{1}{2})$.
\end{proof}

\begin{prop}[Gessel {\cite[(12), p.~81]{Gessel:2005}}]
If $n \geq 1$, then
\[
  \frac{n}{2} \{ \BNN_0(x) + \BNN_0(x) \}^n - ( \BNN_0 + \BN_0(x) )^n
    = \HN_{n-1} \BN_n(x) + \frac{n}{2} \BN_{n-1}(x).
\]
For $x=0$ this reduces to Miki's identity \cite[Theorem, p.~297]{Miki:1978}:
\[
  \{ \BNN_0 + \BNN_0 \}^n - ( \BNN_0 + \BNN_0 )^n
    = 2 \HN_n \BNN_n + \BN_{n-1}.
\]
For $x=\tfrac{1}{2}$ this gives the Faber–-Pandharipande–-Zagier identity
\cite[Lemma~4, p.~22]{Faber&Pandharipande:2000}:
\[
  \frac{n}{2} \{ \BNN_0(\tfrac{1}{2}) + \BNN_0(\tfrac{1}{2}) \}^n
    - ( \BNN_0 + \BN_0(\tfrac{1}{2}) )^n
    = \HN_{n-1} \BN_n(\tfrac{1}{2}) + \frac{n}{2} \BN_{n-1}(\tfrac{1}{2}).
\]
\end{prop}

Note that $( \GNN_0 + \GNN_0 )^n$ is mainly transferred to $( \BNN_0 + \BNN_0 )^n$,
$( \BNN_0 + \BN_0(\tfrac{1}{2}) )^n$, and $( \BN_0 + \BNN_0(\tfrac{1}{2}) )^n$
by Lemmas~\ref{lem:conv-gnn-00} and \ref{lem:conv-rel}.
The Miki type convolutions above show that one can replace binomial convolutions
by usual convolutions, but this does not lead to a simplification compared to
\eqref{eq:conv-gnn-11}.
\smallskip

Agoh showed the formula below for $k \geq 3$ in a slightly different form. In
view of \eqref{eq:conv-gnn-11} and \eqref{eq:conv-gnn-12} this is also valid
for $k=1, 2$.

\begin{prop}[{Agoh \cite[Theorem, p.~61]{Agoh:1989}}] \label{prop:conv-gnn-1k}
If $k, n \geq 1$, then
\[
  ( \GNN_1 + \GNN_k )^n
    = 2 \left( \GNN_{n+k} - \BNN_k \GNN_{n+1}
      + \frac{1}{k} ( \BN_0 + \GNN_{n+1} )^k \right).
\]
\end{prop}

As an application we derive a different Miki type convolution as follows.
\begin{prop}
For $n \geq 1$ we have
\[
  ( \GNN_0 + \GNN_1 )^n
    = 2 \left( \HN_{n-1} \GNN_{n+1} -\frac{1}{2} \GNN_n + \GN_n
      - \{ \BNN_0 + \GNN_2 \}^{n-1}
      + ( \BNN_0 + \GNN_2 )^{n-1} \right).
\]
\end{prop}

\begin{proof}
The case $n=1$ is trivial, let $n \geq 2$. We combine
Lemma~\ref{lem:conv-sum-gnn} and Proposition~\ref{prop:conv-gnn-1k}, where we
split the summations. Note that $\BNN_0=0$ and $\BN_0 = 1$. We then obtain that
\[
  ( \GNN_0 + \GNN_1 )^n = \GNN_n + \sum_{k=1}^{n-1} ( \GNN_1 + \GNN_{n-k} )^k
    = \GNN_n + 2 ( S_1 + S_2 + S_3 ),
\]
where
\begin{align*}
  S_1 &= (n-1) \GNN_n, \quad
    S_2 = - \sum_{k=1}^{n-1} \BNN_k \GNN_{n+1-k}
        = - \{ \BNN_0 + \GNN_2 \}^{n-1}, \\
  S_3 &= \sum_{k=1}^{n-1} \frac{1}{k}
         \sum_{\nu=0}^k \binom{k}{\nu} \BN_\nu \GNN_{n+1-\nu}
      = \HN_{n-1} \GNN_{n+1} + \sum_{k=1}^{n-1} \sum_{\nu=1}^k
         \binom{k-1}{\nu-1} \BNN_\nu \GNN_{n+1-\nu} \\
      &= \HN_{n-1} \GNN_{n+1} + \sum_{\nu=0}^{n-1} \binom{n-1}{\nu}
         \BNN_\nu \GNN_{n+1-\nu}
      = \HN_{n-1} \GNN_{n+1} + ( \BNN_0 + \GNN_2 )^{n-1}.
\end{align*}
Summing up the terms establishes the result.
\end{proof}

As a result of Section~\ref{sect:symm}, we have yet another formula.
\begin{prop}
For odd $n > 1$ we have
\[
  ( \GNN_0 + \GNN_1 )^n = 2 \left( \GNN_{n+1} +
    \sum_{\substack{\nu=1\\\text{odd $\nu$}}}^{n-2}
    \lambda_{n,\nu}(-\tfrac{1}{2}) \GNN_{\nu+1}
    \right),
\]
where the polynomials $\lambda_{n,\nu} \in \SR_{-1/2}$ are defined as in
Proposition~\ref{prop:lambda-rec}.
\end{prop}

\begin{proof}
By Propositions \ref{prop:lambda-rec} and \ref{prop:lambda-sr} we have
\[
  \fh_n (x) = \fs_n( x ) + (n-1) x \fs_{n-1}(x) +
    \sum_{\nu=1}^{n-2} \lambda_{n,\nu}(x) \fs_\nu( x ),
\]
where  $\lambda_{n,\nu} \in \SR_{-1/2}$. Along with
Proposition~\ref{prop:val-fn} we derive the result by setting $x=-\tfrac{1}{2}$
and omitting the terms where $\GNN_{\nu+1} = 0$.
\end{proof}

The last two propositions show that one can resolve the convolution in
\eqref{eq:conv-gnn-0} by
\[
  ( \GNN_0 + \GNN_1 )^n = \gamma_{n+1} \GNN_{n+1} + \gamma_{n-1} \GNN_{n-1} +
    \cdots + \gamma_{2} \GNN_{2} \quad (\text{odd $n > 1$})
\]
with some  coefficients $\gamma_\nu$, but this is again unimproved compared to
the convolution itself. Either the coefficients $\gamma_\nu$ are connected with
Bernoulli numbers or with polynomials that have to be recursively computed.

\section{\texorpdfstring{$p$-Adic}{p-Adic} analysis}

Let $\ZZ_p$ be the ring of $p$-adic integers and $\QQ_p$ be the field of
$p$-adic numbers. Define $\ord_p x$ as the $p$-adic valuation of $x$.
Define $[x]$ as the integer part of $x \in \RR$.

\begin{lemma}[{\cite[p.~37]{Robert:2000}}] \label{lem:val-min}
If $n \geq  1$, then
\[
  \ord_p \sum_{\nu = 0}^n x_\nu \geq \min_{0 \leq \nu \leq n} \ord_p x_\nu
    \quad (x_\nu \in \QQ_p),
\]
where equality holds, if there exists an index $m$ such that
$\ord_p x_m < \ord_p x_\nu$ for all $\nu \neq m$.
\end{lemma}

\begin{lemma}[{\cite[p.~241]{Robert:2000}}] \label{lem:val-fac}
If $n \geq  1$, then
\[
   \ord_p n! = \frac{n - s_p(n)}{p-1},
\]
where $s_p(n)$ is the sum of the digits of the $p$-adic expansion of $n$.
\end{lemma}

For even $n > 0$ the numbers $\GNN_n$ are $p$-adically interesting for $p=2$,
whereas the numbers $\GN_n$ are odd integers.

\begin{prop} \label{prop:ord-gnn}
For $n \in 2\NN$ the numbers $\GNN_n \in \QQ_2 \backslash \ZZ_2$, while
$\GNN_n \in \ZZ_p$ for $p > 2$. More precisely, $\ord_2 \GNN_n = - \ord_2 n$
and $\ord_2 \GN_n = 0$.
\end{prop}

\begin{proof}
Let $n \in 2\NN$.
It is well known that the tangent numbers (cf.~\cite[p.~287]{Graham&others:1994})
\[
  2^n ( 1 - 2^n ) \BNN_n = 2^{n-1} \GNN_n
\]
are integers, here defined with different sign. The right-hand side follows by
\eqref{eq:gn-bn}. Hence the numbers $\GNN_n$ are
$p$-integers for $p > 2$. For $p=2$ we derive that
\[
  \ord_2 \GNN_n = \ord_2 ( 2 \BNN_n ) = - \ord_2 n,
\]
where we have used the fact that $\ord_2 ( 2 \BN_n) = 0$, which follows by the
von Staudt-Clausen theorem, see \cite[Theorem~3, p.~233]{Ireland&Rosen:1990}.
Since $n$ is even, we infer that $- \ord_2 n < 0$ and consequently that
$\GNN_n \in \QQ_2 \backslash \ZZ_2$. By the same arguments, it follows from
\eqref{eq:gn-bn} that $\ord_2 \GN_n = 0$.
\end{proof}

It remains of interest to evaluate the convolution $( \GNN_0 + \GNN_0 )^n$ for
even $n$. We will see that the $2$-adic valuation of $( \GNN_1 + \GNN_1 )^n$
has a simple form, while the $2$-adic valuation of $( \GNN_0 + \GNN_0 )^n$ is
more complicated.

\begin{prop}
Let $n \geq 1$ and $m = [n / 2]$. Then
\[
  \ord_2 \, ( \GNN_1 + \GNN_1 )^n = - \ord_2 (m+1).
\]
\end{prop}

\begin{proof}
By \eqref{eq:conv-gnn-11} it follows that
\[
  \ord_2 \, ( \GNN_1 + \GNN_1 )^n = 1 + \ord_2 ( \GNN_{n+2} + \GNN_{n+1} )
    =: 1 + g,
\]
where either $\GNN_{n+2}$ or $\GNN_{n+1}$ vanishes. Note that $n+2 = 2(m+1)$
for even $n$ and $n+1 = 2(m+1)$ for odd $n$. We conclude that $g = -\ord_2
(2(m+1))$ using Proposition~\ref{prop:ord-gnn}, which gives the result.
\end{proof}

\begin{prop} \label{prop:ord-conv-gnn-0}
We have
\[
  \ord_2 \, ( \GNN_0 + \GNN_0 )^n = \begin{cases}
      \infty, & \text{if $n = 1$}, \\
      1 - \ord_2 (n-1), & \text{if odd $n \geq 3$}, \\
      1 - \ord_2 n - [ \log_2 n ] + 2 \omega_2(n),
        & \text{if even $n \geq 2$},
    \end{cases}
\]
where $\omega_2(n) = 1$, if $n$ is a power of $2$, otherwise $\omega_2(n) = 0$.
\end{prop}

\begin{proof}
The cases $n=1, 2, 4$ are handled separately with $\GNN_1 = 1$ and
$\GNN_2 = -\tfrac{1}{2}$. For odd $n \geq 3$ we have $( \GNN_0 + \GNN_0 )^n =
2n \GNN_{n-1}$ by symmetry and different parity of indices. The result follows
by Proposition~\ref{prop:ord-gnn}. Now, let $n$ even and $n \geq 6$.
We first obtain by Lemma~\ref{lem:conv-rel} that
\begin{equation} \label{eq:loc-sum-gnn00}
  ( \GNN_0 + \GNN_0 )^n = \frac{2}{n} ( \GNN_0 + \GN_0 )^n.
\end{equation}
Set $\II = \{2,4,\ldots,n-2\}$ and $L = [ \log_2 n ] - \omega_2(n)$. Further
define $\ell_2(x)$ as the number of digits of $x \in \NN$ in base $2$.
Note that $\ord_2 \binom{n}{\nu} = - s_2(n) + s_2(\nu) + s_2(n-\nu)$ by
Lemma~\ref{lem:val-fac}. With the help of Proposition~\ref{prop:ord-gnn} and
Lemma~\ref{lem:val-min}, we can evaluate $\ord_2 \, ( \GNN_0 + \GN_0 )^n$ and
obtain that
\begin{equation} \label{eq:loc-ord-gnn00}
  \ord_2 \sum_{\nu \in \II} \binom{n}{\nu} \GNN_\nu \GN_{n-\nu}
    \geq - s_2(n) + \min_{\nu \in \II}
    \big( s_2(\nu) + s_2(n-\nu) - \ord_2 \nu \big).
\end{equation}
We will show that there is only one minimum on the right-hand side to get
equality. To be more precise, this takes place for $\nu_m = 2^L$, the greatest
power of $2$ in $\II$, where we have
\begin{equation} \label{eq:loc-ord-vm}
  s_2( \nu_m ) + s_2( n-\nu_m ) - \ord_2 \nu_m = 1 + s_2( n-2^L ) - L.
\end{equation}
We have now to distinguish between two cases, whether $n$ is a power of $2$ or
not.

Case $n=2^{L+1}$: The right-hand side of \eqref{eq:loc-ord-vm} reduces to
$2-L$. We derive for $\nu \in \II - \{ \nu_m \}$ that
\[
  2 - L < s_2(\nu) + s_2(n-\nu) - \ord_2 \nu,
\]
since $2 \leq s_2(\nu) + s_2(n-\nu)$ and $-L < - \ord_2 \nu$ by construction.

Case $n \neq 2^{L+1}$: One observes that $\ell_2(n) = \ell_2( \nu_m )$.
Regarding \eqref{eq:loc-ord-vm} we then conclude that $1 + s_2( n-2^L ) - L =
s_2(n) - L$. As above, for $\nu \in \II - \{ \nu_m \}$ we have
\[
  \ord_2 \nu - L < 0 \leq \ord_2 \binom{n}{\nu},
\]
which is equivalent to
\[
  s_2(n) - L < s_2(\nu) + s_2(n-\nu) - \ord_2 \nu.
\]
Both cases show that we have exactly one minimum. Thus
\eqref{eq:loc-ord-gnn00} becomes
\[
  \ord_2 \sum_{\nu \in \II} \binom{n}{\nu} \GNN_\nu \GN_{n-\nu}
    = - s_2(n) + s_2(\nu_m) + s_2(n-\nu_m) - \ord_2 \nu_m =: M,
\]
where we compute in case $n=2^{L+1}$ that
\[
  M = - s_2(n) + 2 - L = 1 - L = - [ \log_2 n ] + 2 \omega_2(n),
\]
otherwise $\omega_2(n) = 0$ and
\[
  M = - s_2(n) + s_2(n) - L = - L = - [ \log_2 n ] + 2 \omega_2(n).
\]
Together with \eqref{eq:loc-sum-gnn00} this gives the result.
\end{proof}

\section{Proof of Theorems}

\begin{proof}[Proof of Theorem \ref{thm:fs-value}]
\eqref{item:fs-1} We have two proofs either by Proposition~\ref{prop:drv-fs-bn}
or by Proposition~\ref{prop:int-fn}.
\eqref{item:fs-2} This is shown by Proposition~\ref{prop:val-fn}.
\eqref{item:fs-3} This is given by Corollary~\ref{corl:fs-sr}.
\end{proof}

\begin{proof}[Proof of Theorem \ref{thm:fh-value}]
\eqref{item:fh-1}  We have two proofs either by Proposition~\ref{prop:drv-fh-bn}
or by Proposition~\ref{prop:int-fn}.
\eqref{item:fh-2} This is given by Proposition~\ref{prop:val-fn}.
\eqref{item:fh-3} We have two different methods. The first proof is derived by
Proposition~\ref{prop:val-fn}. Second proof: For even $n$ we then obtain by
Corollary~\ref{corl:fh-sr} and Lemma~\ref{lem:semiring} that
\[
  \fh_n(x)/x - (n-1) \fs_{n-1}(x) = 0
\]
for $x=-\tfrac{1}{2}$.
\eqref{item:fh-4} For even $n$ we get by Propositions~\ref{prop:val-fn}
and \ref{prop:ord-gnn} that
\[
  \ord_2 \fh_n(-\tfrac{1}{2}) = -1 + \ord_2 \GNN_n = -1 - \ord_2 n.
\]
For odd $n$ we derive by Propositions~\ref{prop:val-fn},
\ref{prop:ord-conv-gnn-0}, and Eq.~\eqref{eq:conv-gnn-0} that
\begin{align*}
  \ord_2 \fh_n(-\tfrac{1}{2})
    &= \ord_2 \left( \frac{1}{2} ( \GNN_0 + \GNN_1 )^n \right)
    = \ord_2 \left( \frac{1}{4} ( \GNN_0 + \GNN_0 )^{n+1} \right) \\
    &= -1 - \ord_2( n+1 ) - [ \log_2( n+1 ) ] + 2 \omega_2(n+1).
\end{align*}
If $n+1 = 2^r$ with $r \geq 1$, then the latter expression simplifies to $-1 -
2(r-1)$, since $\ord_2( n+1 ) = [ \log_2( n+1 ) ] = r$ and $\omega_2(n+1) = 1$
in that case; otherwise $\omega_2(n+1)$ vanishes.
\eqref{item:fh-5} This is a consequence of Propositions \ref{prop:lambda-rec}
and \ref{prop:lambda-sr}.
\eqref{item:fh-6} This is Corollary~\ref{corl:fh-sr}.
\end{proof}

To prove Theorem \ref{thm:fsh-deriv} we have to introduce some transformations.
The Hadamard product (\cite[pp.~85--86]{Comtet:1974}) of two formal series
\begin{equation} \label{eq:poly-f-g}
  f(x) = \sum_{\nu \geq 0} a_\nu x^\nu, \quad
  g(x) = \sum_{\nu \geq 0} b_\nu x^\nu
\end{equation}
is defined to be
\[
  (f \odot g)(x) = \sum_{\nu \geq 0} a_\nu b_\nu x^\nu.
\]

For a sequence $(s_n)_{n \geq 0}$ its binomial transform $(s^*_n)_{n \geq 0}$
is defined by
\[
  s^*_n = \sum_{k=0}^{n} \binom{n}{k} (-1)^k s_k.
\]
Since the inverse transform is also given as above, we have
$(s^{**}_n)_{n \geq 0} = (s_n)_{n \geq 0}$, see \cite[p.~192]{Graham&others:1994}.
The following transformation is due to Euler (\cite[Ex.~3, p.~169]{Bromwich:1908}),
where we prove a finite case.

\begin{prop} \label{prop:hadamard-prod}
If $f, g$ are polynomials as defined in \eqref{eq:poly-f-g},
then the Hadamard product is given by
\begin{equation} \label{eq:hadamard-prod}
  (f \odot g)(x) = \sum_{\nu \geq 0} (-1)^\nu a^*_\nu \,
    \frac{g^{(\nu)}(x)}{\nu!} x^\nu.
\end{equation}
\end{prop}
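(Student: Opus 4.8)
The plan is to expand the right-hand side of \eqref{eq:hadamard-prod}, interchange the order of summation, and recognize the resulting inner sum as the binomial transform applied twice, which returns the original coefficients. Throughout, everything stays finite because $g$ is a polynomial.

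First I would rewrite the differentiated term: if $g$ has degree $d$, then $g^{(\nu)}(x)/\nu! = \sum_{m \geq \nu} \binom{m}{\nu} b_m x^{m-\nu}$, hence, using $\binom{m}{\nu} = 0$ for $m < \nu$,
\[
  \frac{g^{(\nu)}(x)}{\nu!}\,x^\nu = \sum_{m \geq 0} \binom{m}{\nu} b_m x^m .
\]
Since $g^{(\nu)} = 0$ for $\nu > d$, the outer sum over $\nu$ on the right-hand side of \eqref{eq:hadamard-prod} is finite, even though the binomial transform $(a^*_\nu)$ of a polynomial need not be finitely supported. Substituting the above and interchanging the two finite sums gives
\[
  \sum_{\nu \geq 0} (-1)^\nu a^*_\nu\,\frac{g^{(\nu)}(x)}{\nu!}\,x^\nu
    = \sum_{m \geq 0} b_m x^m \sum_{\nu=0}^{m} (-1)^\nu \binom{m}{\nu} a^*_\nu .
\]

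The key step is to evaluate the inner sum. By definition, $\sum_{\nu=0}^{m} (-1)^\nu \binom{m}{\nu} a^*_\nu$ is the binomial transform of $(a^*_n)$ at index $m$, i.e.\ $a^{**}_m$; since the binomial transform is an involution (cf.\ \cite[p.~192]{Graham&others:1994}), $a^{**}_m = a_m$. If one prefers, this is checked directly by inserting the definition of $a^*_\nu$, using the subset identity $\binom{m}{\nu}\binom{\nu}{k} = \binom{m}{k}\binom{m-k}{\nu-k}$, and then summing the alternating binomial coefficients, $\sum_{j \geq 0} (-1)^j \binom{m-k}{j} = \delta_{m,k}$. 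Therefore the displayed expression equals $\sum_{m \geq 0} a_m b_m x^m = (f \odot g)(x)$, which proves the claim.

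There is no serious obstacle in this argument: the only points requiring attention are the justification of the rearrangement of sums, which is immediate because $g$ is a polynomial so only finitely many terms appear, and the bookkeeping of the double binomial transform.
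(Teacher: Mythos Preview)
Your argument is correct. You expand $g^{(\nu)}(x)x^\nu/\nu!$ in powers of $x$, swap the (finite) sums, and then read off the coefficient of $x^m$ as $b_m\,a^{**}_m=b_m a_m$ via the involution property of the binomial transform; nothing is missing.

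The paper proceeds differently: it fixes $f$ and argues by induction on the degree of $g$, writing $g_{n+1}(x)=g_n(x)+b_{n+1}x^{n+1}$ and checking that the difference of the two sides of \eqref{eq:hadamard-prod} for $g_{n+1}$ and $g_n$ equals $a_{n+1}b_{n+1}x^{n+1}$. The inductive step boils down to the same identity you use, namely $\sum_{\nu=0}^{n+1}\binom{n+1}{\nu}(-1)^\nu a^*_\nu=a_{n+1}$, so the two proofs share the same kernel. Your direct double-sum interchange is a bit more economical and makes the role of the binomial-transform involution completely transparent; the paper's induction, on the other hand, isolates one coefficient at a time and thereby sidesteps any discussion of why the outer sum over $\nu$ is finite (it simply never writes an infinite sum).
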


\begin{proof}
We may assume that $f \cdot g \neq 0$. Let $N = \deg g$.
Define $g_n(x) = \sum_{\nu = 0}^n b_\nu x^\nu$ where $g_N(x) = g(x)$.
We use induction on $n$ up to $N$. For $n = 0$ we have
\[
  (f \odot g_0)(x) = a^*_0 \, g_0(x) = a_0 b_0.
\]
Now assume the result holds for $n \geq 0$.
Since $g_{n+1}(x)=b_{n+1} x^{n+1} + g_n(x)$, we consider the difference of
\eqref{eq:hadamard-prod} for $n+1$ and $n$. Thus
\begin{align*}
  (f \odot g_{n+1})(x) - (f \odot g_n)(x)
    &= \sum_{\nu = 0}^{n+1} (-1)^\nu a^*_\nu \, \frac{b_{n+1}(n+1)_\nu \,
       x^{n+1-\nu}}{\nu!} x^\nu \\
    &= b_{n+1} x^{n+1} \sum_{\nu = 0}^{n+1} \binom{n+1}{\nu} (-1)^\nu a^*_\nu \\
    &= a_{n+1} b_{n+1} x^{n+1}
\end{align*}
showing the claim for $n+1$.
\end{proof}

\begin{proof}[Proof of Theorem \ref{thm:fsh-deriv}]
The binomial transform
\begin{equation} \label{eq:bt-hn}
  -\frac{1}{n} = \sum_{k=1}^{n} \binom{n}{k} (-1)^k \HN_k
\end{equation}
is well known, cf.~\cite[pp.~281--282]{Graham&others:1994}.
Using Proposition~\ref{prop:hadamard-prod} with
$f(x) = \sum_{\nu = 1}^n \HN_\nu x^\nu$ and $g = \fs_n$,
where $f \odot g = \fh_n$, gives the result by means of \eqref{eq:bt-hn}.
\end{proof}

\appendix
\section{Figures}

\begin{minipage}{0.95\textwidth}
\captionsetup{type=figure}
\captionof{figure}{Functions $\fs_n$}
\label{fig:fs}
\begin{center}
  \includegraphics[width=9cm]{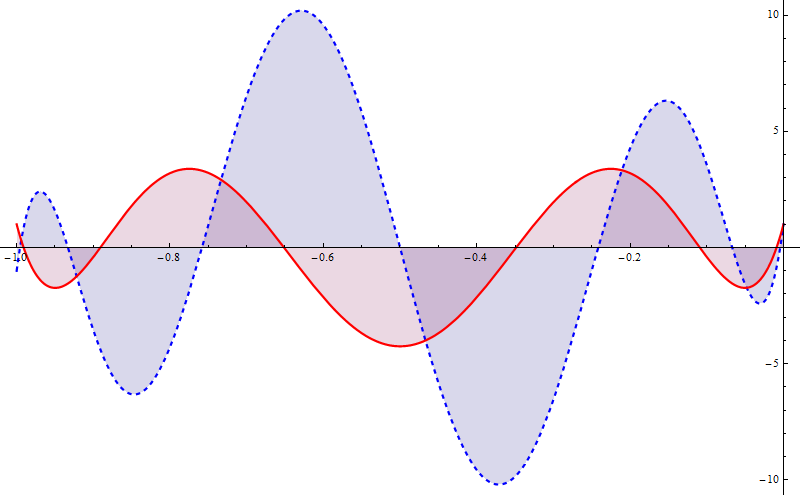}
\end{center}
\begin{center} \small
  $\fs_n(x)/x$ with $n=8$ (dashed blue line), 
  $\fs_n(x)/x$ with $n=7$ (red line).
\end{center}
\end{minipage}

\begin{minipage}{0.95\textwidth}
\captionsetup{type=figure}
\captionof{figure}{Functions $\fh_n$}
\label{fig:fh}
\begin{center}
  \includegraphics[width=9cm]{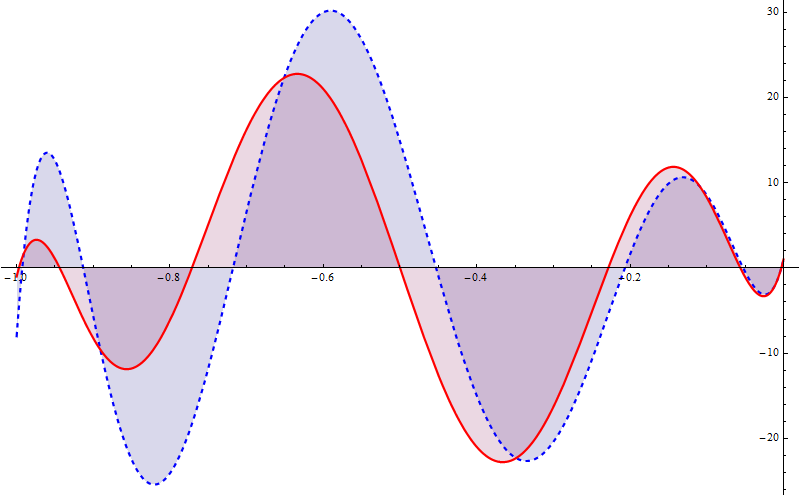}
\end{center}
\begin{center} \small
  $\fh_n(x)/x$ with $n=8$ (dashed blue line), \\
  $\fh_n(x)/x-(n-1)\fs_{n-1}(x)$ with $n=8$ (red line).
\end{center}
\end{minipage}

\section*{Acknowledgment}

We are grateful to the anonymous referee for valuable suggestions and motivating to
extend and improve the results; also for giving reference \cite{Sprugnoli:1994}.

\bibliographystyle{amsplain}

\bigskip

\end{document}